\newtheorem*{theorem*}{Theorem}
\newtheorem{lemma}{Lemma}
\renewcommand{\geq}{\geqslant}
\renewcommand{\leq}{\leqslant}
\renewcommand{\a}{\mathbf{a}}
\renewcommand{\b}{\mathbf{b}}
\renewcommand{\c}{\mathbf{c}}
\newcommand{\x}{\mathbf{x}}
\newcommand{\y}{\mathbf{y}}
\newcommand{\z}{\mathbf{z}}
\newcommand{\uu}{\mathbf{u}}
\newcommand{\0}{\mathbf{0}}
\newcommand{\R}{\mathbb{R}}
\newcommand{\la}{\langle}
\newcommand{\ra}{\rangle}
\newcommand{\conv}{\mathrm{conv\,}}
\begin{document}

\title{Linear programming duality for geometers}

\author{Gergely Ambrus}
\email[G. Ambrus]{ambrus@renyi.hu}

\address{\'Ecole Polytechnique F\'ed\'erale de Lausanne, Chair of Combinatorial Geometry,
Station 8, CH-1015 Lausanne, Switzerland}
\address{Alfr\'ed R\'enyi Institute of Mathematics, Hungarian Academy of Sciences, Re\'altanoda u. 13-15, 1053 Budapest, Hungary}

\maketitle

\begin{abstract}
In this short note, we present a geometric proof for the duality theorem of linear programming. Besides being self-contained and simple, the present approach also provides a transparent way for understanding  this fundamental result.
\end{abstract}

\section{Motivation}

Since its discovery in the late 1930's, linear programming  has been arguably the most widely used method of applied mathematics. In 1947, von Neumann set one of its fundamental theoretical cornerstones by proving the Duality Theorem, which provides a duality relation between linear programs (the first published version is due to Gale, Kuhn and Tucker \cite{gkt}). Several proofs have been found to this important result, most notably, either using Dantzig's Simplex Algorithm \cite{dantzig}, or applying a relative of the Farkas lemma \cite{matousek, schrijver}. Our goal in this short note is to provide an alternative, geometric proof, which also provides a transparent interpretation.

\section{Geometric framework}

The following concepts are standard notions and results in convex geometry, and being such, they are well covered in the existing literature. A systematic treatment of them may be found, for example, in \cite{gardner} or \cite{schneider}.

The main object of study will be a {\em convex body} living in $\R^n$: a closed set $K \subset \R^n$ which is convex, that is, along with any two points $a,b \in K$, it also contains the segment connecting $a$ and $b$. For the present note, we allow $K$ to be unbounded. The most crucial property of convex sets is the {\em separation theorem} (or, the Hahn-Banach theorem), which asserts that given two disjoint convex sets, there exists a hyperplane that separates them: the two sets lie in the opposite (closed) half-spaces. In particular, it implies that each convex body may be written as an intersection of closed half-spaces.

Given a set $S \in \R^n$, its {\em convex hull}, denoted by $\conv S$, is the smallest convex set that contains $S$. It is a standard exercise that the convex hull of $S$ equals to the set of all {\em convex combinations} of points of $S$, that is,
\begin{equation}\label{convcom}
\conv S = \left\{ \sum_{i=1}^m \lambda_i s_i :\ m \geq 1, \ s_i \in S , \ \lambda_i \geq 0,  \textrm{ and } \sum_{i=1}^m \lambda_i =1 \right \}.
\end{equation}

How can we describe a convex body $K \subset \R^n$? Let us assume that $K$ contains the origin $\0$. There are two extremely natural ways to specify~$K$. First, we may stand at the origin and look around, measuring in each direction the distance to the boundary of $K$ (which may well be infinite) - note that convexity guarantees that this quantity is well defined! This method leads to the notion of the {\em radial function} of $K$, denoted by $\rho_K$, which is defined on $\R^n \setminus \{\0\}$ by
\begin{equation}\label{radialfunctiondef}
\rho_K(\uu) = \sup \{ \lambda \geq 0: \lambda \uu \in K \}.
\end{equation}

The other, equivalently natural way uses the fact that $K$ may be written as the intersection of half-spaces. Thus, this time, for each direction $\uu$, we take a hyperplane perpendicular to $\uu$ and we measure how far it has to be  shifted from $\0$ so that it becomes tangent to (supports) $K$. We arrive at the {\em support function} of $K$: it is the function $\R^n \setminus \{\0\} \rightarrow \R \cup \{\infty\}$ given by
\begin{equation}\label{suppfunctiondef}
h_K(\uu) = \sup_{\x \in K} \la \x, \uu \ra.
\end{equation}

Duality may naturally be introduced into convex geometry as follows. Let $K$ be a convex body in $\R^n$. Its {\em polar body} (or dual set), denoted by $K^*$, is given by
\begin{equation}\label{polardef}
K^* = \{ \y \in \R^n \, : \, \la \x, \y \ra \leq 1 \textrm{ for every } \x \in K \};
\end{equation}
that is, $K^*$ consists of those vectors which have a small inner product with all the vectors of $K$. This definition is familiar from functional analysis, since by the Riesz representation theorem, linear functionals on $\R^n$ can be represented as the inner product taken with a fixed vector.

One readily sees that $K^*$ is not empty, since it contains the origin. Moreover, a little thinking reveals that $K^*$ is closed; it is also not hard to check that $K^*$ is convex. Thus, polarity is indeed a relation between convex bodies. What about reflexivity? Since
\begin{equation*}
K^* = (\conv (K \cup \0))^*,
\end{equation*}
the most that we can hope for is reflexivity in the case when $\0$ is contained in $K$. This indeed turns out to be the case: the separation theorem easily implies that if $K$ is closed, convex, and $\0
\in K$, then
\begin{equation}\label{polarpolar}
(K^*)^* = K.
\end{equation}

Polarity reverses containment: if $K \subset L$ for the sets $K, L \subset \R^n$,  then $L^* \subset K^*$ holds.
Furthermore, it also interacts nicely with intersections: for $K_1, K_2$ closed, convex sets in $\R^n$,
\begin{equation}\label{intersectpolar}
(K_1 \cap K_2)^* = \conv (K_1^* \cup K_2^*).
\end{equation}

So, apart from the formal definition, what links together dual pairs of convex bodies? Since we presented two natural ways for the description of convex bodies, the ambitious may prospect a natural relationship between the support function of $K$ and the radial function of $K^*$. Voil\`{a}!

\begin{lemma}\label{suppradial}
Let $K \subset \R^n$ be a closed, convex set containing the origin. For every non-zero vector $\uu \in \R^n$,
\[
h_K(\uu) = \frac 1 {\rho_{K^*}(\uu)} \ \textrm{ and } \
h_{K^*}(\uu) = \frac 1 {\rho_K(\uu)} ,
\]
\nopagebreak
using the convention $1/\infty = 0$ and $1/0 = \infty$.
\end{lemma}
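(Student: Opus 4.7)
The plan is to establish the first identity $h_K(\uu) = 1/\rho_{K^*}(\uu)$ by unpacking the definitions, and then to obtain the second one for free by invoking reflexivity~\eqref{polarpolar}.

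First I would rewrite $\rho_{K^*}(\uu)$ using the definition of the polar body:
\[
\rho_{K^*}(\uu) = \sup\{\lambda \geq 0 : \lambda \uu \in K^*\} = \sup\{\lambda \geq 0 : \lambda \la \x, \uu \ra \leq 1 \text{ for every } \x \in K\}.
\]
Because $\lambda \geq 0$, the inner condition is equivalent to the single scalar inequality $\lambda \cdot h_K(\uu) \leq 1$. Moreover, $\0 \in K$ forces $h_K(\uu) \geq 0$, so the computation of this supremum reduces to a short case analysis on whether $h_K(\uu)$ is finite and positive, zero, or infinite. In all three cases the supremum evaluates to $1/h_K(\uu)$ under the stated conventions $1/0 = \infty$ and $1/\infty = 0$, which yields the first identity.

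Once this is in hand, the second identity is automatic: $K^*$ is itself closed, convex, and contains the origin, so applying the first identity with $K^*$ in place of $K$ gives $h_{K^*}(\uu) = 1/\rho_{(K^*)^*}(\uu)$, and by~\eqref{polarpolar} this equals $1/\rho_K(\uu)$.

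The only real subtlety lies in the degenerate regimes. When $h_K(\uu) = 0$ the defining inequality is vacuous for every $\lambda \geq 0$, forcing $\rho_{K^*}(\uu) = \infty$; when $h_K(\uu) = \infty$, choosing $\x_k \in K$ with $\la \x_k,\uu\ra \to \infty$ shows that no positive $\lambda$ can work, so $\rho_{K^*}(\uu) = 0$. Both situations can genuinely occur because the paper allows $K$ to be unbounded, and they are exactly what the conventions $1/0 = \infty$, $1/\infty = 0$ are designed to absorb. Beyond this bookkeeping, I anticipate no real obstacle.
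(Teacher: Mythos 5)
Your proposal is correct and follows essentially the same route as the paper: the first identity is obtained directly from the definitions \eqref{radialfunctiondef}, \eqref{suppfunctiondef}, \eqref{polardef}, and the second is deduced from the reflexivity property \eqref{polarpolar}. You simply spell out the case analysis on $h_K(\uu)$ that the paper leaves implicit, and that bookkeeping is accurate.
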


\begin{proof}
Because of the reflexivity property \eqref{polarpolar}, it suffices to prove only the first identity; but that is just the direct consequence of definitions \eqref{radialfunctiondef}, \eqref{suppfunctiondef} and~\eqref{polardef}.
\end{proof}

In a geometric language, linear programming asks for finding the support function of specific convex bodies in a given direction. These specific sets are the {\em convex polyhedrons}: they can be defined by finitely many linear inequalities, that is, they may be expressed as the intersection of finitely many closed half-spaces. Formally, a convex polytope $\R^n$ is the set of points $\x \in \R^n$ satisfying the system of linear inequalities
\[
\la \a_i, \x \ra \leq b_i
\]
for $i = 1, \ldots, m$, where $\a_i \in \R^n$ and $b_i \in \R$. For short-hand, this is written as $A \x \leq \b$, where the inequality is understood coordinate-wise, $A$ is the $m \times n$ matrix with row vectors $\a_i$, $\x \in \R^n$ a column vector, and $\b$ is the $m$-dimensional column vector with coordinates $b_i$.
A convex polyhedron need not be bounded, and we also allow the possibility for it to be empty.

Keeping our goal in mind, it is little wonder that we need to calculate the polar set of a convex polyhedron containing the origin.

\begin{lemma}\label{polarlemma}
Let $K$ be a convex polyhedron in $\R^n$ containing $\0$ (not necessarily in the interior). Assume that $K$ is  defined by the set of inequalities
\[
\la \a_i, \x \ra \leq b_i ,
\]
for $ i = 1, \ldots, n$, where $\0 \neq \a_i \in \R^n$; the condition $\0 \in K$ implies that  $b_i  \geq 0$ for all $i$.  Then $K^*$ is the convex set given by
\begin{equation}\label{polarpolyhedron}
K^* = \left\{ \sum_{i=1}^n \mu_i \mathbf{a_i}  : \mu_i \geq 0 \textrm{ for all } i=1, \ldots, n,   \textrm{ and } \sum_{i=1}^n \mu_i  b_i \leq 1 \right\}.
\end{equation}
\end{lemma}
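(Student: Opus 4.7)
The plan is to prove the two inclusions in \eqref{polarpolyhedron} separately; write $R$ for the right-hand side.

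The inclusion $R \subseteq K^*$ is immediate: given $\y = \sum_{i=1}^n \mu_i \a_i \in R$ and any $\x \in K$, the inequalities $\la \a_i, \x\ra \leq b_i$ and $\mu_i \geq 0$ combine to give $\la \y, \x\ra = \sum_i \mu_i \la \a_i, \x\ra \leq \sum_i \mu_i b_i \leq 1$, placing $\y \in K^*$ by \eqref{polardef}.

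For the reverse inclusion $K^* \subseteq R$, I plan to write $K$ as the intersection of the half-spaces $H_i = \{\x : \la \a_i, \x\ra \leq b_i\}$ and apply \eqref{intersectpolar} (extended to a finite intersection by induction on the number of sets). Each $H_i^*$ can be computed explicitly: if $\y$ had a nonzero component orthogonal to $\a_i$, then $\la \y, \cdot\ra$ would be unbounded above on $H_i$ along that direction, so every element of $H_i^*$ has the form $\mu \a_i$, and the containment $\mu \a_i \in H_i^*$ then reduces to the conditions $\mu \geq 0$ and $\mu b_i \leq 1$. Consequently $H_i^* = \{\mu \a_i : \mu \geq 0,\ \mu b_i \leq 1\}$, which is a segment from $\0$ to $\a_i/b_i$ when $b_i > 0$ and a ray from $\0$ through $\a_i$ when $b_i = 0$. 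By \eqref{intersectpolar}, $K^* = \conv\bigl(\bigcup_i H_i^*\bigr)$, so every $\y \in K^*$ is a convex combination of points from the $H_i^*$'s, say $\y = \sum_j \lambda_j \nu_j \a_{i_j}$ with $\lambda_j, \nu_j \geq 0$, $\sum_j \lambda_j = 1$, and $\nu_j b_{i_j} \leq 1$. Collecting by index through $\mu_i := \sum_{j:\, i_j = i} \lambda_j \nu_j$ then gives $\y = \sum_i \mu_i \a_i$ with $\mu_i \geq 0$ and $\sum_i \mu_i b_i = \sum_j \lambda_j \nu_j b_{i_j} \leq \sum_j \lambda_j = 1$, hence $\y \in R$.

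The main subtlety is the case where some $b_i = 0$: then $H_i^*$ is an unbounded ray, and the convex hull of a finite union containing such rays need not be closed, so a strict reading of \eqref{intersectpolar} would call for a closure on the right-hand side. This causes no harm here, since $R$ is itself closed---being the image of the polyhedron $\{\mu \in \R^n : \mu_i \geq 0,\ \sum_i \mu_i b_i \leq 1\}$ under the linear map $\mu \mapsto \sum_i \mu_i \a_i$---and it contains $\conv\bigl(\bigcup_i H_i^*\bigr)$, so it contains the closure of that convex hull as well.
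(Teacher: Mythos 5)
Your argument is correct and takes essentially the same route as the paper: compute the polars $H_i^*$ of the defining half-spaces, assemble them via \eqref{intersectpolar}, and rewrite the convex hull using \eqref{convcom}. The one place where you genuinely diverge is the closure subtlety when some $b_i=0$, and there you are more careful than the paper itself, whose proof asserts that the convex hull of the segments and rays ``translates exactly'' into the right-hand side of \eqref{polarpolyhedron}. That translation is not exact. Take $\a_1=(1,0)$, $b_1=1$, $\a_2=(0,1)$, $b_2=0$ in $\R^2$: the point $(1,1)$ lies in the set \eqref{polarpolyhedron} (take $\mu_1=\mu_2=1$) and indeed in $K^*$, but it is not a convex combination of points of $[\0,\a_1]\cup\{\lambda\a_2:\lambda\geq 0\}$, since writing $1=\mu_1=\lambda_1\nu_1$ with $\lambda_1,\nu_1\leq 1$ forces $\lambda_1=1$ and hence kills the $\a_2$-coefficient; so the convex hull is a proper, non-closed subset of $K^*$. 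Your two-sided argument --- the elementary verification of $R\subseteq K^*$, together with $K^*\subseteq\overline{\conv\bigl(\bigcup_i H_i^*\bigr)}\subseteq R$ because $R$ is a closed convex set containing that union --- repairs this. The only ingredient you lean on that deserves a word is the closedness of $R$: the image of a polyhedron under a linear map is again a polyhedron (hence closed), which is standard but not free; citing or proving that fact (or replacing it by a direct Farkas-type verification of $K^*\subseteq R$) would make the write-up fully self-contained.
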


\begin{proof}
The polar set of a half-space of the form $\{ \x: \la \a, \x \ra \leq b\}$ with $b>0$ is the segment $[0, \a/b]$, whereas the polar set of the half-space $\{ \x : \la \a, \x \ra \leq 0\}$ is the half-line $\{ \lambda \a: \lambda \geq 0\}$.
Thus, \eqref{intersectpolar} implies that
\[
K^* = \conv \{ [\0, \a_i / b_i],\ i=1, \ldots, n \},
\]
where  $[\0, \a / 0]$ denotes the half-line emanating from the origin in direction~$\a$. Switching to convex combinations, in view of \eqref{convcom}, this translates exactly to the above form.
\end{proof}

\section{Linear programs and duality}

We have already seen the geometric interpretation of a linear program. For the precise definition, we follow the standard nomenclature \cite{matousek, schrijver}. Let $A$ be a real $m \times n$ matrix, and $\c \in \R^n$, $\b \in \R^m$ be real vectors. The {\em linear program} associated to $A$, $\b$ and $\c$ is the following problem (P):

\begin{equation}
 \textit{ Maximise } \la \c , \x\ra \textit{ over } \x \in \R^n,
 \textit{ subject to } A \x \leq \b \textit{ (coordinate-wise).}
 \tag{P}\label{primalprog}
\end{equation}

The vectors satisfying the linear constraints $\la \a_i, \x \ra \leq b_i$, $i = 1, \ldots, m$, are called {\em feasible solutions}; the set of them is a convex polyhedron $F$, which may be empty or unbounded. The linear quantity to be maximised,  $\la  \c, \x \ra$, is called the {\em objective function}. Comparing to \eqref{suppfunctiondef}, the maximum sought after is $h_K(\c)$, the value of the support function of $K$ at $\c$. The solution of the program, that is, the maximum of the objective function will be denoted by $\nu_{max}$.

A linear program may not have a solution due to two possible reasons. Either $F$ is empty, in which case the program is said to be {\em infeasible}. The other possibility is that $\la \c, \x \ra$ attains arbitrarily large values over the feasible solutions, implying that $F$ is infinite; we then call the program {\em unbounded}. For the sake of convenience, in these cases we define the solution of the program to be $\nu_{max} = -\infty$ and $\nu_{max} = \infty$, respectively.

To every linear program in the form \eqref{primalprog}, we are going to assign a corresponding linear program, called the {\em (asymmetric) dual program}, defined as follows:
\begin{equation}
 \textit{Minimise } \la \b , \y\ra \textit{ over } \y \in \R^n,
 \textit{ subject to } A^\top \y = \c
\textit{ and } \y \geq \0. \tag{D}\label{dualprog}
\end{equation}

\noindent
Note that the condition $A^\top \y = \c$ may also be written as $\sum_{i=1}^m y_i \a_i = \c$.

If the solution to the dual program exists, we denote it by $\nu_{min}$. Again, \eqref{dualprog} may be infeasible or unbounded; accordingly, we set $\nu_{min} = \infty$ and $\nu_{min} = -\infty$.

Our goal is to link $\nu_{min}$ and $\nu_{max}$ together. Here comes the first result.
\begin{lemma}[Weak duality theorem]\label{weakdual}
For every feasible solution $\y$ of the dual program {\em (D)}, the value $\la \b, \y \ra$ provides an upper bound for the solution of the primal program {\em (P)}; in particular, $\nu_{max} \leq \nu_{min}$.
\end{lemma}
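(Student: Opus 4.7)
The plan is to establish the bound $\la \c, \x \ra \leq \la \b, \y \ra$ for \emph{every} primal feasible $\x$ and dual feasible $\y$ by a one-line computation, and then to deduce $\nu_{max} \leq \nu_{min}$ by taking sup/inf, handling the degenerate cases separately.

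First I would fix a feasible $\x$ for (P) and a feasible $\y$ for (D). By the dual constraint $A^\top \y = \c$, I can rewrite the objective as
\[
\la \c, \x \ra \,=\, \la A^\top \y, \x \ra \,=\, \la \y, A\x \ra \,=\, \sum_{i=1}^m y_i \,\la \a_i, \x \ra.
\]
Now I use the primal constraint $\la \a_i, \x \ra \leq b_i$ together with $y_i \geq 0$ to replace each summand by $y_i b_i$, obtaining
\[
\la \c, \x \ra \,\leq\, \sum_{i=1}^m y_i b_i \,=\, \la \b, \y \ra.
\]
This is exactly the statement that the value $\la \b, \y \ra$ of any dual feasible $\y$ is an upper bound for the primal objective on its feasible set.

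For the inequality $\nu_{max} \leq \nu_{min}$, I would split into cases according to the conventions set earlier. If both programs are feasible, then taking the supremum over $\x$ and the infimum over $\y$ on the two sides of the displayed inequality yields $\nu_{max} \leq \nu_{min}$ directly. If (P) is infeasible, then $\nu_{max} = -\infty$ and the inequality is automatic; similarly if (D) is infeasible, $\nu_{min} = +\infty$ and the inequality again holds trivially. The last case to inspect is when (P) is unbounded: the above chain shows that the existence of any dual feasible $\y$ would force $\la \c, \x \ra \leq \la \b, \y \ra$ for all primal feasible $\x$, contradicting unboundedness; hence (D) must be infeasible and $\nu_{min} = +\infty$, so the inequality holds in this case as well.

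There is no genuine obstacle here — the entire content is the manipulation $\la \c, \x \ra = \la \y, A\x \ra \leq \la \y, \b \ra$, which combines the dual equality constraint, the primal inequality constraint, and the nonnegativity $\y \geq \0$. The only care needed is bookkeeping with the extended-real values $\pm \infty$ when one of the two programs lacks a finite optimum.
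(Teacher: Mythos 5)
Your proof is correct and follows essentially the same route as the paper: the chain $\la \c, \x \ra = \la A^\top \y, \x \ra = \la \y, A\x \ra \leq \la \y, \b \ra$ using the dual equality, the primal inequality, and $\y \geq \0$ is exactly the paper's argument. Your extra bookkeeping of the $\pm\infty$ conventions is slightly more explicit than the paper's, but adds nothing essentially different.
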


\begin{proof} If (P) or (D) are infeasible, then the inequality automatically holds. Otherwise, assume that $\y$ is a feasible solution of (D), and $\x$ is a feasible solution of (P). Since $A^\top \y = \c$, $A \x \leq \b$, and $\y$ is non-negative,
\begin{equation*}
\la \c, \x \ra = \la  A^\top \y, \x \ra = \la \y, A \x \ra \leq \la  \y,\b \ra. \qedhere
\end{equation*}
\end{proof}

\noindent
In particular, the weak duality theorem implies that if \eqref{primalprog} or \eqref{dualprog} is unbounded, then the other program  must be infeasible.

\begin{theorem*}[Duality of linear programming]\label{linprogdual}
If the primal program \eqref{primalprog} is feasible and bounded, then the dual program \eqref{dualprog} is feasible and bounded as well, and the solutions of the two programs are equal: $\nu_{max} = \nu_{min}$.
\end{theorem*}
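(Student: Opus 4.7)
My plan is to read off dual feasibility from an explicit description of the polar body of the feasibility polyhedron $F = \{\x \in \R^n : A\x \le \b\}$, using that the primal optimum is precisely $\nu_{max} = h_F(\c)$. The key observation is that, once $\0 \in F$, Lemma \ref{polarlemma} can be rewritten, by renaming the coefficients $\mu \mapsto \y$, as
\[
F^* = \{ A^\top \y : \y \ge \0,\ \la \b, \y \ra \le 1 \},
\]
so a vector lies in $F^*$ exactly when it can be realised as $A^\top \y$ for some non-negative $\y$ satisfying $\la \b, \y \ra \le 1$. Placing a suitable scalar multiple of $\c$ into $F^*$ is therefore tantamount to producing a dual-feasible $\y$ with a controlled value of $\la \b, \y \ra$, and this is the geometric heart of the proof.

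As a preparation I would translate the primal: pick any $\x_0 \in F$ and replace $\x \mapsto \x - \x_0$, $\b \mapsto \b - A\x_0$. This keeps $A$ and $\c$ intact, puts the origin inside the new feasibility polyhedron, forces $\b \ge \0$, and shifts both $\nu_{max}$ and $\nu_{min}$ by the same constant $\la \c, \x_0 \ra$; hence it suffices to prove the theorem in this normalised setting, in which $\nu := \nu_{max} = h_F(\c) \ge 0$.

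When $\nu > 0$ the argument is immediate: by homogeneity $h_F(\c/\nu) = 1$, so $\c/\nu \in F^*$, and the display above furnishes $\y \ge \0$ with $A^\top \y = \c/\nu$ and $\la \b, \y \ra \le 1$; then $\nu \y$ is dual-feasible of objective value at most $\nu$, and the weak duality Lemma \ref{weakdual} provides the matching lower bound, so $\nu_{min} = \nu_{max}$.

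The real obstacle is the degenerate case $\nu = 0$, where the rescaling above collapses. Here $h_F(t\c) = 0$ for every $t \ge 0$, so the entire ray $\{t\c : t \ge 0\}$ lies in $F^*$, and for each $t > 0$ the polar description yields $\y^{(t)} \ge \0$ with $A^\top \y^{(t)} = t \c$ and $\la \b, \y^{(t)} \ra \le 1$; the vector $\y^{(t)}/t$ is then dual-feasible of objective value at most $1/t$. The subtlety is that the coordinates of $\y^{(t)}$ indexed by those $i$ with $b_i = 0$ may a priori blow up with $t$, so a naive compactness argument fails. I would circumvent this via Carath\'eodory's theorem, replacing each $\y^{(t)}$ by one supported on a subset $S(t) \subset \{1,\ldots,m\}$ of cardinality at most $n$ with $\{\a_i\}_{i \in S(t)}$ linearly independent, while preserving the three conditions $\y \ge \0$, $A^\top \y = t\c$ and $\la \b, \y \ra \le 1$. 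For a fixed such $S$ the linear system $A^\top \y = t\c$ on the coordinates in $S$ has a unique solution that scales linearly in $t$, so $\y^{(t)}/t$ is independent of $t$. Since only finitely many admissible subsets $S$ exist, some one of them is selected along a sequence $t_k \to \infty$; the common value $\y^*$ of $\y^{(t_k)}/t_k$ along this sequence is dual-feasible and satisfies $\la \b, \y^* \ra \le 1/t_k$ for every $k$, hence $\la \b, \y^* \ra = 0 = \nu_{max}$, and weak duality closes this case too.
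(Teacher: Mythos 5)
Your proof is correct and follows essentially the same route as the paper: translate so that $\0$ lies in the feasibility polyhedron, describe its polar via Lemma~\ref{polarlemma}, and convert membership of a scalar multiple of $\c$ in that polar into a dual feasible point by rescaling, closing with weak duality (Lemma~\ref{weakdual}). The one place you go beyond the paper is the degenerate case $h_F(\c)=0$, where your Carath\'eodory reduction actually produces a dual optimal $\y^*$ with $\la \b, \y^* \ra = 0$ (attainment of the minimum), whereas the paper disposes of this case via the convention $1/0=\infty$, $1/\infty=0$ and in effect only exhibits dual feasible points of arbitrarily small objective value.
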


\begin{proof}
We start by noticing that by virtue of Lemma~\ref{weakdual},  \eqref{dualprog} must be bounded.
If $\c = \0$, then $\nu_{max} = 0$. Since $\y = \0$ is a feasible solution of (D), $\nu_{min} \leq 0$; on the other hand, Lemma~\ref{weakdual} guarantees that $\nu_{min} \geq 0$. Hence, $\nu_{min} = 0 = \nu_{max}$.
Thus, from now on, we suppose that $\c \neq \0$.

Let us first handle the case when $\0 \in F$, that is, the origin is a feasible solution of \eqref{primalprog}. It follows that $\b \geq \0$. Lemma~\ref{suppradial} states that
\begin{equation}\label{suppradeq}
\frac 1 {\nu_{max}} = \frac 1 {h_K(\c)} = \rho_{K^*}(\c) = \sup \{\rho \geq 0 : \rho \, \c \in K^*\}.
\end{equation}

\noindent
By Lemma~\ref{polarlemma}, the polar set of $F$ is
\begin{equation}\label{k*expr}
K^* = \left \{ \sum_{i=1}^m z_i\, \mathbf{a_i} : \z \geq \0 \textrm{ and } \ \la \b, \z \ra \leq 1 \right \},
\end{equation}
where $\z = (z_1, \ldots, z_m) \in \R^m$. Thus, we seek the supremum of the non-negative numbers $\rho$ such that the vector $\rho\, \c$ has a representation of the~form
\begin{equation}\label{rhomaxeq}
\rho \, \c = \sum_{i=1}^m z_i \a_i,
\end{equation}
where $z_i \geq 0$ for $i= 1, \ldots, m$,  and $\la \b, \z \ra \leq 1$. Since \eqref{primalprog} is bounded, \eqref{suppradeq} shows that this supremum is strictly positive. For any $\rho >0$ satisfying \eqref{rhomaxeq},  write $y_i = z_i / \rho$ for $i=1, \ldots, m$. Equation \eqref{rhomaxeq} transforms into
\[
\c = A^\top \y
\]
with $\y \geq \0$, where $\y$ satisfies that $\la \b, \y \ra \leq 1/ \rho $. We arrive exactly to the dual program \eqref{dualprog}: maximising $\rho$ is equivalent to minimising $\la \b, \y \ra$. Furthermore, $\rho_{K^*}(\c) >0 $ shows that \eqref{dualprog} is feasible. We conclude by \eqref{suppradeq}, which leads to
\[
\nu_{min} = \frac 1 {\rho_{K^*}(\c)} = h_K(\c) = \nu_{max}.
\]

Now, let us turn to the general case. By the assumption, there exists a feasible solution of \eqref{primalprog}, say $\x_0 \in F$. We translate $F$ to $F-\x_0$. Formally, for $i = 1, \ldots, m$, set $b_i' = b_i - \la \x_0, \a_i\ra$, and as usual, let $\b' = (b_1', \ldots, b_m')$. Furthermore, for any $\x \in \R^n$, let $\x' = \x - \x_0$. Define a new linear program~by
\begin{equation}
 \textit{ Maximise } \la \c , \x'\ra \textit{ over } \x' \in \R^n,
 \textit{ subject to } A \x' \leq \b' .
 \tag{P'}\label{pprime}
\end{equation}
Clearly, \eqref{pprime} is feasible and bounded (geometrically, we search for the support function of the translated body). Since $\0$ is a feasible solution of \eqref{pprime}, the previous argument applies with
\begin{equation}
 \textit{ Minimise } \la \b' , \y\ra \textit{ over } \y \in \R^n,
 \textit{ subject to } A^\top \y = \c \textrm{ and } \y \geq \0 .
 \tag{D'}\label{dprime}
\end{equation}
Since the feasible solutions of \eqref{dualprog} and \eqref{dprime} are the same, we readily conclude that \eqref{dualprog} is feasible. Finally, the identities
\[ \la \c , \x'\ra = \la \c , \x\ra - \la \c , \x_0\ra\]
and
\[
\la \b' , \y\ra = \la \b, \y \ra - \sum_{i=1}^m \la \x_0, \a_i\ra y_i = \la \b, \y \ra  - \la \x_0, A^\top \y \ra = \la \b, \y \ra - \la \x_0, \c \ra
\]
show that since the solutions of \eqref{pprime} and \eqref{dprime} are equal, the solutions of \eqref{primalprog} and \eqref{dualprog} must agree as well.
\end{proof}

We note that since there is no requirement posed for $\b$, it may well happen that (D) is feasible and bounded while (P) is infeasible, and thus, this form of LP duality is not an involution. However, one can easily fix this shortfall by adding the extra constraints $\x \geq \0$ to the primal program (resulting in the {\em canonical form}) and relaxing the constraints of the dual program to $A^\top \y \geq \c$, $\y \geq \0$ (the {\em symmetric dual form}). It follows from the duality theorem that a linear program in  canonical form is feasible and bounded if and only if its symmetric dual is so.

\end{document}